
\documentclass[12pt, reqno, twoside, letterpaper]{amsart}


\usepackage[text={144mm,233mm},centering]{geometry} 

\usepackage{setspace}
\setlength{\parindent}{15pt}


\usepackage{amsthm}
\newtheoremstyle{mystyle}
{1em}                      
{1em}                      
{\itshape}                 
{}                         
{\scshape}                 
{.}                        
{5pt plus 1pt minus 1pt}   
{}                         

\newtheoremstyle{mystyle2}
{1em}                    
{1em}                    
{}                       
{}                       
{\scshape}               
{.}                      
{5pt plus 1pt minus 1pt} 
{}                       

\theoremstyle{mystyle}

\newtheorem{X}{X}                     
\newtheorem{theorem}[X]{Theorem}

\newtheorem*{theorem-cited*}{Theorem} 

\newtheorem{Y}{Y}
\newtheorem{corollary}[Y]{Corollary}

\newtheoremstyle{acknowledgements}  
{1em}                               
{1em}                               
{}                                  
{}                                  
{\bfseries}                         
{.}                                 
{5pt plus 1pt minus 1pt}            
{}                                  

\theoremstyle{acknowledgements}

\newtheorem*{acknowledgements*}{Acknowledgements} 


\usepackage{color}
\usepackage{hyperref}
\hypersetup{
    pdftoolbar=true,                        
    pdfmenubar=false,                        
    pdffitwindow=false,                      
    pdfstartview={FitH},                    
    pdftitle={Consecutive primes in tuples},    
    pdfauthor={William D. Banks, Tristan Freiberg & Caroline L. Turnage--Butterbaugh},    
    pdfcreator={William D. Banks, Tristan Freiberg & Caroline L. Turnage--Butterbaugh},
    pdfproducer={William D. Banks, Tristan Freiberg & Caroline L. Turnage--Butterbaugh},
    pdfnewwindow=true,                       
    colorlinks=true,
    linkcolor={black},                      
    citecolor={black},
    filecolor={black},
    urlcolor={black},         
}


\usepackage{amsmath}
\usepackage{amssymb}                      
\usepackage{mathabx}                      
\usepackage{mathrsfs}                     
\usepackage{cite}                         
\usepackage{enumitem}                     
\usepackage{caption}                      


\renewcommand{\le}{\ensuremath{\leqslant}} 
\renewcommand{\ge}{\ensuremath{\geqslant}} 
\newcommand{\NN}{\ensuremath{\mathbb{N}}}  
\newcommand{\ZZ}{\ensuremath{\mathbb{Z}}}  
\newcommand{\e}{\ensuremath{\mathrm{e}}}   
\newcommand{\cH}{\ensuremath{\mathcal{H}}} 
\newcommand{\cA}{\ensuremath{\mathcal{A}}} 
\newcommand{\cB}{\ensuremath{\mathcal{B}}} 
\newcommand{\cS}{\ensuremath{\mathcal{S}}} 


\title[Consecutive primes in tuples]
      {Consecutive primes in tuples}

\author[W. D. Banks]{William D. Banks}
\address{Department of Mathematics, 
         University of Missouri, Columbia MO, USA.}
\email{bankswd@missouri.edu}

\author[T. Freiberg]{Tristan Freiberg}
\address{Department of Mathematics, 
         University of Missouri, Columbia MO, USA.}
\email{freibergt@missouri.edu}

\author[C. L. Turnage-Butterbaugh]{Caroline L. Turnage--Butterbaugh}
\address{Department of Mathematics, 
         University of Mississippi, University MS, USA.}
\email{clbutter@olemiss.edu}

\thanks{CLT-B is supported by a GAANN fellowship (grant no.\ 
        P200A90092).}

\date{\today}


\begin{document}

\begin{abstract}
In a  stunning new advance towards the Prime $k$-tuple Conjecture,
Maynard  and Tao  have shown that if $k$  is sufficiently large in 
terms of $m$, then for an admissible $k$-tuple 
$
\cH(x) = \{gx + h_j\}_{j=1}^k
$
of linear forms  in $\ZZ[x]$, the  set  
$
\cH(n) = \{gn + h_j\}_{j=1}^k
$ 
contains at least  $m$  primes for infinitely many  $n  \in  \NN$.
In this  note, we deduce that 
$
\cH(n) = \{gn + h_j\}_{j=1}^k
$ 
contains at least $m$ {\em consecutive} primes for infinitely many 
$n \in \NN$.
We answer  an  old question  of Erd\H os  and Tur\'an by producing  
strings  of  $m + 1$  consecutive  primes  whose  successive  gaps 
$
\delta_1,\ldots,\delta_m
$ 
form an increasing (resp.\ decreasing) sequence.  
We also show that such strings exist with 
$
\delta_{j-1} \mid \delta_j
$ 
for $2 \le j \le m$.  
For any  coprime  integers $a$  and $D$ we  find  arbitrarily long 
strings of consecutive primes  with bounded gaps in the congruence 
class $a \bmod D$. 
\end{abstract}

\maketitle

\section{Introduction and statement of results}

We say that a  $k$-tuple of  linear forms in  $\ZZ[x]$, denoted by
\[
\cH(x) = \{g_jx + h_j\}_{j=1}^k, 
\]
is {\em admissible} if the associated polynomial
$
f_\cH(x) = \prod_{1 \le j \le k}(g_jx + h_j)
$ 
has no fixed prime divisor, that is, if the inequality
\[
\#\{n \bmod p : f_\cH(n) \equiv 0 \bmod p\} < p
\]
holds for every prime number $p$.  
In this note we consider only $k$-tuples for which
\begin{equation}
 \label{eq:1}
 g_1,\ldots,g_k > 0 
  \qquad\text{and}\qquad
   {\textstyle \prod_{1 \le i < j\le k} } (g_ih_j - g_jh_i) \ne 0.
\end{equation}
One form of the  {\em Prime $k$-tuple Conjecture}  asserts that if 
$\cH(x)$ is admissible and satisfies \eqref{eq:1}, then 
$
\cH(n) = \{g_jn + h_j\}_{j=1}^k
$
is a $k$-tuple of primes for infinitely many $n \in \NN$.
Recently,   Maynard \cite{5}  and  Tao  have made   great  strides 
towards proving this form of the Prime $k$-tuple Conjecture, which 
rests among the greatest unsolved problems in number theory.
The  following  formulation of their  remarkable  theorem has been 
given by Granville \cite[Theorem 6.2]{3}.

\begin{theorem-cited*}[Maynard--Tao]
For  any  $m \in \NN$  with  $m \ge 2$  there is a  number  $k_m$,
depending only on $m$, such that the following holds for every 
integer $k \ge k_m$.  
If   $\{g_jx + h_j\}_{j=1}^k$   is   admissible    and   satisfies
\eqref{eq:1},  then  $\{g_jn + h_j\}_{j=1}^k$  contains $m$ primes
for infinitely many $n \in \NN$.
In  fact,  one  can  take   $k_m$  to  be  any  number  such  that
$k_m\log k_m > \e^{8m + 4}$.
\end{theorem-cited*}

Zhang   \cite[Theorem 1]{10}   was   the   first   to  prove  that 
$
\liminf_{n \to \infty}(p_{n+1} -  p_n)
$ 
is   bounded;  he   showed  that   for  an   admissible  $k$-tuple 
$\cH(x)  =  \{x  +  b_j\}_{j=1}^k$  there  exist  infinitely  many
integers  $n$ such that  $\cH(n)$ contains  at  least two  primes,
provided that $k \ge 3.5 \times 10^6$.
Zhang's  proof  was  subsequently  refined  in  a polymath project
\cite[Theorem 2.3]{7}   to  the  point   where  one   could   take 
$k_2 = 632$ (at  least in the case of monic linear forms).
Maynard  \cite[Propositions 4.2, 4.3]{5}  has  shown  that one can 
take  $k_2 = 105$  and  $k_m = cm^2\e^{4m}$  in  the  Maynard--Tao 
theorem, where $c$ is an absolute (and effective) constant.
Another  polymath  project \cite[Theorem 3.2]{8} has since refined  
Maynard's   work  so   that   one   can   take  $k_2  =  50$   and 
$k_m = c\e^{(4 - 28/157)m}$.
(In  \cite{5,8},  only  tuples of  monic  linear forms are treated  
explicitly,  although  the results should extend to general linear 
forms as considered in \cite{3}.)

The  purpose of the  present  note is to explain some  interesting
consequences of the Maynard--Tao theorem.
We  refer  the  reader to the  expository  article  \cite{3} of 
Granville  for the recent  history and  ideas  leading up  to this 
breakthrough  result, as well  as a  discussion of  its  potential 
impact. 
Without  doubt,  this  result  and  its  proof  will have numerous 
applications,   many   of  which   have  already  been   given  in 
\cite{3}.  
We  are   grateful  to  Granville   for  pointing  out  to us that 
Corollary \ref{cor:1} (below) can now be proved. 

The following theorem establishes the existence of $m$-tuples that 
infinitely  often  represent strings of  {\em  consecutive}  prime 
numbers.

\begin{theorem}
 \label{thm:1}
Let $m, k \in \NN$ with $m \ge 2$  and $k \ge k_m$, where $k_m$ is 
as in the Maynard--Tao theorem.
Let   $b_1, \ldots, b_k$    be   distinct   integers   such   that 
$\{x + b_j\}_{j=1}^k$  is admissible, and let  $g$ be any positive 
integer coprime with $b_1\cdots b_k$.
Then, for some subset 
$
\{h_1,\ldots,h_m\} \subseteq \{b_1,\ldots,b_k\},
$ 
there    are    infinitely    many    $n  \in  \NN$    such   that 
$gn + h_1,\ldots,gn + h_m$ are consecutive primes.
\end{theorem}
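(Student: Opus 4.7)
My plan is to combine the Maynard--Tao theorem with an auxiliary Chinese-remainder sieve that forces the \emph{other} integers in the window $[gn+\min_j b_j,\, gn+\max_j b_j]$ to be composite. Write $C = \bigl([\min_j b_j,\max_j b_j] \cap \ZZ\bigr) \setminus \{b_1,\ldots,b_k\}$ for the set of ``gap'' values. For each $c \in C$, pick a prime $p_c$, choosing the $p_c$ pairwise distinct, coprime to $g$, and large enough that $p_c > \max_j |b_j - c|$. Since $\gcd(g,p_c)=1$, the congruence $gn \equiv -c \pmod{p_c}$ admits a unique solution for $n \bmod p_c$; assembling these via the Chinese remainder theorem yields a single congruence $n \equiv n_0 \pmod{P}$, where $P = \prod_{c \in C} p_c$. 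For any sufficiently large $n$ in this residue class, each $gn+c$ with $c \in C$ is a proper multiple of $p_c$ and hence composite.

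I would then apply the Maynard--Tao theorem to the $k$-tuple of linear forms $\cH'(x) = \{gPx + (gn_0 + b_j)\}_{j=1}^k$, which manifestly satisfies \eqref{eq:1}. The essential task is to verify admissibility of $\cH'$ at every prime $q$. For primes $q \nmid gP$, multiplication by $(gP)^{-1} \bmod q$ gives a bijective correspondence between the forbidden residues for $\cH'$ and the forbidden residues for the original tuple $\{x+b_j\}$, so admissibility is inherited. For primes $q \mid g$, the coprimality hypothesis $\gcd(g, b_1\cdots b_k)=1$ forces $b_j \not\equiv 0 \pmod q$ for every $j$, so $gn_0 + b_j \equiv b_j \not\equiv 0$ and no residue is forbidden. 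For $q = p_c$, one computes $gn_0 + b_j \equiv b_j - c \pmod{p_c}$, which is nonzero by the size choice of $p_c$. Thus $\cH'$ is admissible.

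Maynard--Tao then produces infinitely many $N \in \NN$ such that at least $m$ of the values $g(PN + n_0) + b_j$ are prime. Setting $n = PN + n_0$: every integer in $[gn+\min_j b_j,\, gn+\max_j b_j]$ is either of the form $gn + b_j$ for some $j$, or composite by the sieve of the first step; hence the primes among $\{gn + b_j\}_{j=1}^k$ are precisely the primes in this window, and therefore form a string of $r \ge m$ \emph{consecutive} primes. Since $\{b_1,\ldots,b_k\}$ has only finitely many subsets, the pigeonhole principle furnishes a fixed subset $S \subseteq \{b_1,\ldots,b_k\}$ with $|S| = r \ge m$ that arises for infinitely many such $n$; enumerating the first $m$ elements of $S$ in increasing order as $h_1 < h_2 < \cdots < h_m$ yields the required $m$-tuple $gn+h_1,\ldots,gn+h_m$ of consecutive primes for infinitely many $n$.

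The step I expect to require the most care is the admissibility verification for $\cH'$, which must simultaneously handle the three kinds of primes $q \mid g$, $q = p_c$, and $q \nmid gP$; the coprimality hypothesis $\gcd(g, b_1\cdots b_k) = 1$ is what makes the case $q \mid g$ go through, while the freedom in choosing the auxiliary primes $p_c$ sufficiently large handles the case $q = p_c$. Everything else is a routine assembly of the Maynard--Tao input and a finite pigeonhole.
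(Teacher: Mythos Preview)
Your proof is correct and follows essentially the same approach as the paper: both construct an auxiliary Chinese-remainder sieve to force the non-$b_j$ integers in the window to be composite, verify that the resulting shifted $k$-tuple $\{gPx + gn_0 + b_j\}_{j=1}^k$ remains admissible (splitting into the cases $q\mid g$, $q\mid P$, and $q\nmid gP$), and then apply Maynard--Tao. The only cosmetic differences are that the paper phrases the choice of auxiliary primes as ``$t\not\equiv b_j\bmod q_t$'' rather than your ``$p_c>\max_j|b_j-c|$'', and it extracts the fixed subset via a maximality argument (taking the largest $m'$ for which some $m'$-subset works infinitely often) rather than your direct pigeonhole over subsets of $\{b_1,\ldots,b_k\}$.
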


A special case of Theorem~\ref{thm:1}, with $m = 2$,  $g = 1$ (and 
the  weaker  bound $k_2 \ge 3.5 \times 10^6$),  has  already  been 
established  in recent  work of  Pintz \cite[Main Theorem]{6}, 
which is based on  Zhang's method but uses a different argument to 
the one presented here. 

Theorem \ref{thm:1} (which is proved in \S\ref{sec:2}) has various 
applications to the study of gaps between consecutive primes.
To    state   our    results,   let    us    call    a    sequence 
$(\delta_j)_{j=1}^{m}$   of  positive  integers  a  { \em  run  of 
consecutive prime gaps} if
\[
 \delta_{j} = d_{r + j} = p_{r+j+1} - p_{r+j} 
  \qquad (1 \le j \le m)
\]
for some  natural  number  $r$,  where  $p_n$  denotes  the $n$-th 
smallest prime.
The  following  corollary of  Theorem  \ref{thm:1}  answers an old 
question of Erd\H os  and  Tur\'an\cite{1} (see also Erd\H os 
\cite{2}  and  Guy \cite[A11]{4}).

\begin{corollary}
 \label{cor:1} 
For    every   $m \ge 2$   there   are   infinitely    many   runs 
$(\delta_j)_{j=1}^{m}$   of    consecutive    prime    gaps   with
$\delta_1 < \cdots < \delta_m$,  and  infinitely  many  runs  with
$\delta_1 > \cdots > \delta_m$.
\end{corollary}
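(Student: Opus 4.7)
The plan is to apply Theorem~\ref{thm:1} with $m+1$ in place of $m$ (producing $m+1$ consecutive primes and hence $m$ consecutive prime gaps) to an admissible $k$-tuple $\{x + b_j\}_{j=1}^k$ that is rigged so that \emph{every} $(m+1)$-element subset of $\{b_1, \ldots, b_k\}$, when sorted in increasing order, has strictly monotone consecutive differences. Once such a $k$-tuple is in hand, whichever subset Theorem~\ref{thm:1} happens to select, the gaps between the resulting consecutive primes will automatically form the desired monotone run.

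For the increasing case, I would fix $k \ge k_{m+1}$, let $N = \prod_{p \le k} p$, and set $b_j = N^j$. The tuple $\{x + b_j\}_{j=1}^k$ is admissible because for each prime $p \le k$ all $b_j$ lie in the single residue class $0 \bmod p$, and for $p > k$ the $k < p$ residues cannot exhaust $\ZZ/p\ZZ$. For any subset indexed by $i_1 < \cdots < i_{m+1}$, the gaps $\delta_s = N^{i_{s+1}} - N^{i_s}$ satisfy
\[
\delta_s < N^{i_{s+1}} \le N^{i_{s+1}}(N-1) \le \delta_{s+1}
\]
since $N \ge 2$, giving $\delta_1 < \cdots < \delta_m$. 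Taking $g = 1$ in Theorem~\ref{thm:1} then yields infinitely many $n$ for which $n$ plus the chosen $h$-values are consecutive primes with strictly increasing gaps.

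For the decreasing case, I would instead set $b_j = N^{k+1} - N^j$, which are positive and all congruent to $0 \bmod p$ for every $p \le k$, so admissibility holds by the same argument. A subset sorted by value now corresponds to indices $j_1 > \cdots > j_{m+1}$ with gaps $\delta_s = N^{j_s} - N^{j_{s+1}}$; using $j_{s+1} \le j_s - 1$ one finds $\delta_s \ge N^{j_s - 1}(N-1)$ and $\delta_{s+1} \le N^{j_{s+1}} \le N^{j_s - 1}$, so $\delta_s > \delta_{s+1}$ as soon as $N \ge 3$, which is automatic since $k \ge k_{m+1}$ is far larger than $3$. A second invocation of Theorem~\ref{thm:1} delivers the strictly decreasing runs. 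The one delicate point is reconciling admissibility with gap monotonicity; the primorial choice of $N$ collapses all $b_j$ into a common residue class modulo every small prime and thus handles both constraints simultaneously, so I do not anticipate any substantive obstacle beyond this choice.
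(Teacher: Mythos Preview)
Your proposal is correct and follows essentially the same strategy as the paper: apply Theorem~\ref{thm:1} with $m+1$ in place of $m$ to an admissible tuple whose shifts grow geometrically, so that \emph{any} $(m+1)$-element subset automatically has strictly monotone consecutive differences. The paper simply takes $b_j = 2^j$ (respectively $b_j = -2^j$) in place of your primorial powers $N^j$; admissibility is immediate because $0$ is never a power of $2$ modulo an odd prime, so your primorial device, while correct, is unnecessary. The paper also notes the slightly stronger telescoping inequality $\delta_1 + \cdots + \delta_{j-1} < \delta_j$, which your construction yields just as easily.
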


Moreover, in the proof (see \S\ref{sec:2}) we construct infinitely 
many runs $(\delta_j)_{j=1}^{m}$  of consecutive  prime  gaps with
\[
 \delta_1 + \cdots + \delta_{j-1} < \delta_j 
  \qquad (2 \le j \le m),
\] 
and infinitely many runs with
\[
 \delta_j > \delta_{j+1} + \cdots + \delta_m
  \qquad (1\le j \le m-1).
\]

Using a similar argument, we can impose a divisibility requirement 
amongst gaps between consecutive primes as well.

\begin{corollary}
 \label{cor:2} 
For   every   $m  \ge  2$   there   are   infinitely   many   runs 
$(\delta_j)_{j=1}^{m}$   of  consecutive   prime  gaps  such  that 
$\delta_{j-1}\mid \delta_j$ for $2 \le j \le m$,   and  infinitely 
many   runs   such   that   $\delta_{j+1}   \mid   \delta_j$   for 
$1 \le j \le m - 1$.
\end{corollary}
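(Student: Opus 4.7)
The plan is to invoke Theorem~\ref{thm:1} with the parameter $m$ replaced by $m+1$, thereby producing $m+1$ consecutive primes and hence $m$ gaps $\delta_1,\ldots,\delta_m$. Since Theorem~\ref{thm:1} guarantees only that \emph{some} $(m+1)$-subset $\{h_1 < \cdots < h_{m+1}\} \subseteq \{b_1,\ldots,b_k\}$ produces the consecutive primes---and we cannot prescribe which one---the principal task is to construct the admissible tuple so that \emph{every} such subset yields gaps with the required divisibility.

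For the first assertion, I would build inductively a sequence of positive integers $b_1 < b_2 < \cdots < b_k$, with $k \ge k_{m+1}$, satisfying the triple condition
\[
(b_{i_2} - b_{i_1}) \mid (b_{i_3} - b_{i_2}) \qquad (1 \le i_1 < i_2 < i_3 \le k).
\]
This suffices, since the divisibility $(b_{j_l} - b_{j_{l-1}}) \mid (b_{j_{l+1}} - b_{j_l})$ for the gaps of any $(m+1)$-subset $\{b_{j_1} < \cdots < b_{j_{m+1}}\}$ is precisely such a triple condition. In the inductive step, set $b_{n+1} = b_n + L$, where $L$ is any positive multiple of $\operatorname{lcm}\{b_j - b_i : 1 \le i < j \le n\}$; for any triple $(i_1, i_2, n+1)$, the identity $b_{n+1} - b_{i_2} = (b_n - b_{i_2}) + L$ exhibits both summands as multiples of $b_{i_2} - b_{i_1}$, by the inductive hypothesis and the choice of $L$ respectively. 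To secure admissibility, I would then rescale $b_i \mapsto P b_i$ with $P = \prod_{p \le k} p$, which places every $P b_i$ in the class $0 \bmod p$ for each prime $p \le k$, while primes $p > k$ trivially leave a residue uncovered; the rescaling preserves the triple divisibility.

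Applying Theorem~\ref{thm:1} with $g = 1$ to the rescaled tuple yields infinitely many $n$ for which $n + h_1,\ldots,n + h_{m+1}$ are consecutive primes for some $(m+1)$-subset, and by construction $\delta_{j-1} = h_j - h_{j-1}$ divides $\delta_j = h_{j+1} - h_j$ for $2 \le j \le m$. The second assertion follows from the symmetric construction in which the triple condition reads $(b_{i_3} - b_{i_2}) \mid (b_{i_2} - b_{i_1})$---obtained by running the inductive step from the top down, or equivalently by reversing the previously built sequence---giving $\delta_{j+1} \mid \delta_j$ for $1 \le j \le m-1$. The main obstacle, as foreshadowed, is the inability to specify which $(m+1)$-subset is selected by Theorem~\ref{thm:1}; the LCM-based inductive construction surmounts it by enforcing a uniform divisibility property across every such subset.
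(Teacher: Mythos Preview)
Your argument is correct and follows essentially the same route as the paper: replace $m$ by $m+1$, build the shifts $b_1<\cdots<b_k$ inductively so that each new gap is divisible by all previous pairwise differences, scale by the primorial $P=\prod_{p\le k}p$ to force admissibility, apply Theorem~\ref{thm:1} with $g=1$, and reverse the sequence for the second statement.

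The one substantive difference is in the choice of the increment. You take $b_{n+1}-b_n$ to be a multiple of $\operatorname{lcm}\{b_j-b_i:i<j\le n\}$ and verify the \emph{triple condition} $(b_{i_2}-b_{i_1})\mid(b_{i_3}-b_{i_2})$ directly; this is exactly what is needed for $\delta_{j-1}\mid\delta_j$ and makes the deduction a one-liner. The paper instead sets $b_j-b_{j-1}=\prod_{1\le s<t\le j-1}(b_t-b_s)$ and argues in two steps (a product divisibility plus a telescoping sum), which is slightly less direct but buys the stronger conclusion $\delta_1\cdots\delta_{j-1}\mid\delta_j$ announced after the corollary. Your lcm-based recursion does not yield that stronger statement as written, though replacing the lcm by the full product of prior differences would.
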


In the proof (see \S\ref{sec:2}) we construct infinitely many runs 
$(\delta_j)_{j=1}^{m}$   of   consecutive    prime    gaps    with
$\delta_1\cdots \delta_{j-1} \mid \delta_j$ for $2 \le j \le m$, 
and infinitely many runs with 
$
\delta_{m}\delta_{m-1} \cdots \delta_{j+1} \mid \delta_j
$  
for $1 \le j \le m-1$.

As  another  application  of Theorem \ref{thm:1}, in \S\ref{sec:2}
we prove the following  extension of a result of  Shiu \cite{9} on 
consecutive primes in a given congruence class. 

\begin{corollary}
 \label{cor:3}
Let $a$ and $D\ge 3$ be coprime integers.  
For  every $m \ge 2$, there are  infinitely  many $r \in \NN$ such 
that 
$
 p_{r+1} \equiv p_{r+2} 
   \equiv \cdots \equiv p_{r+m} \equiv a \bmod D
$
and  $p_{r+m}  -  p_{r+1}  \le  DC_m$,  where  $C_m$ is a constant 
depending only on $m$.
\end{corollary}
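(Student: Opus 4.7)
The plan is to apply Theorem \ref{thm:1} with $g = D$ and to construct an admissible $k$-tuple $\{x + b_j\}_{j=1}^k$ whose shifts all lie in the residue class $a \bmod D$. Any prime of the form $Dn + h_j$ with $h_j \in \{b_1, \ldots, b_k\}$ will then automatically satisfy $Dn + h_j \equiv a \pmod{D}$, so Theorem \ref{thm:1} will directly produce infinitely many strings of $m$ consecutive primes in the class $a \bmod D$; the gap bound $\le DC_m$ will fall out of controlling the diameter $b_k - b_1$ of the tuple.

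To build the $b_j$, I would take $k = k_m$ and set $W = D \prod_{p \le k,\; p \nmid D} p$. Since $\gcd(a, D) = 1$, the Chinese Remainder Theorem supplies an integer $r$ with $r \equiv a \pmod{D}$ and $r \equiv 1 \pmod{p}$ for every prime $p \le k$ not dividing $D$. I would then let $b_1 < \cdots < b_k$ be the $k$ smallest positive integers in the arithmetic progression $r + W\ZZ$. Each $b_j$ is $\equiv a \pmod{D}$ with $\gcd(a,D) = 1$, which forces $\gcd(D, b_1 \cdots b_k) = 1$; hence $g = D$ is coprime to $b_1 \cdots b_k$ as Theorem \ref{thm:1} requires. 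Admissibility is verified prime by prime: for $p \le k$ with $p \mid D$, every $b_j \equiv a \not\equiv 0 \pmod{p}$; for $p \le k$ with $p \nmid D$, every $b_j \equiv 1 \not\equiv 0 \pmod{p}$; and for $p > k$, the $k$ values $b_j \bmod p$ cannot cover all $p$ residues.

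With these hypotheses in place, Theorem \ref{thm:1} yields a subset $\{h_1, \ldots, h_m\} \subseteq \{b_1, \ldots, b_k\}$, which I may reorder so that $h_1 < \cdots < h_m$, such that $Dn + h_1, \ldots, Dn + h_m$ are consecutive primes for infinitely many $n \in \NN$. Every one of these primes is $\equiv a \pmod{D}$, and
\[
p_{r+m} - p_{r+1} \;=\; h_m - h_1 \;\le\; b_k - b_1 \;=\; (k-1)W \;\le\; D \, (k_m - 1)\prod_{p \le k_m} p,
\]
so the choice $C_m = (k_m - 1)\prod_{p \le k_m} p$, which depends only on $m$, completes the proof. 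No step is really an obstacle; the only substantive design choice is the CRT recipe, tuned so that admissibility at small primes and coprimality with $g$ hold simultaneously, and taking the target residue to be $a \bmod D$ and $1$ at the other small primes handles both conditions cleanly.
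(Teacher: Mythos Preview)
Your proof is correct and follows the same strategy as the paper: build an admissible $k$-tuple $\{x+b_j\}$ with every $b_j\equiv a\pmod D$, then apply Theorem~\ref{thm:1} with $g=D$. The paper's construction is marginally slicker---it starts from any admissible tuple $\{x+a_j\}$ and sets $b_j=Da_j+a$---but your explicit CRT recipe works equally well (just watch the double use of the symbol $r$).
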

\noindent 
Shiu \cite{9}  attributes to  Chowla the conjecture that there are  
infinitely many pairs  of consecutive primes $p_r$ and $p_{r + 1}$  
with   $p_r  \equiv  p_{r + 1}  \equiv  a  \bmod D$    (see   also 
\cite[A4]{4}), and proved the  above result without the constraint 
$p_{r + m} - p_{r + 1} \le DC_m$.

\section{Proofs}
 \label{sec:2}

\begin{proof}[Proof of Theorem \ref{thm:1}]
Replacing each  $b_j$  with $b_j + gN$ for a suitable integer $N$, 
we    can    assume    without    loss    of    generality    that
\[
 1 < b_1 < \cdots < b_k.
\]
Let $\cS$ be the set of integers  $t$ such that $1 \le t \le b_k$,
$t \not\in \{b_1,\ldots,b_k\}$. 
Let $\{q_t : t \in \cS\}$  be distinct primes coprime to  $g$ such
that   $t  \not\equiv  b_j  \bmod  q_t$   for   all   $t \in \cS$, 
$1\le j \le k$. 
By the Chinese remainder theorem we can find an integer  $a$  such 
that 
\begin{equation}
 \label{eq:2}
  ga + t \equiv 0 \bmod q_t \qquad (t \in \cS),
\end{equation}
and therefore
\begin{equation}
 \label{eq:3}
  ga + b_j \not\equiv 0 \bmod q_t 
   \qquad (t \in \cS, 1\le j \le k).
\end{equation}

Consider the $k$-tuple 
\[
 \cA(x) = \{gQx + ga + b_j\}_{j=1}^{k}
  \qquad\text{where}\qquad 
   Q = {\textstyle \prod_{t \in \cS} }\, q_t.
\]
In     view     of     \eqref{eq:3}     and    the    fact    that 
$\gcd(g,b_1\cdots b_k) = 1$,  we  have $\gcd(gQ,ga + b_j) = 1$ for 
each  $j$,  and  since  $\{x + b_j\}_{j=1}^k$  is  admissible,  it 
follows that the $k$-tuple $\cA(x)$ is also admissible. 
Moreover,  $\cA(x)$  satisfies  \eqref{eq:1}  (with $g_j = gQ$ and 
$h_j = ga + b_j$) as the integers  $b_1,\ldots,b_k$  are  distinct 
and $gQ \ge 1$.

For  every  $N \in \NN$,  the  congruences  \eqref{eq:2}  and  our 
choices     of       $Q$       and       $a$       imply      that
\[
 g(QN + a) + t \equiv 0 \bmod q_t \qquad (t \in \cS).
\]
Consequently,    any    prime     number    in    the     interval 
$[g(QN + a) + b_1, g(QN + a) + b_k]$ must lie in $\cA(n)$.
Let $m'$  be the largest integer  for which there  exists a subset 
$
\{h_1,\ldots,h_{m'}\} \subseteq \{b_1,\ldots,b_k\}
$ 
with      the        property        that        the       numbers
\begin{equation}
 \label{eq:4}
 g(QN + a) + h_i \qquad (1 \le i \le m')
\end{equation}
are simultaneously prime for infinitely many $N \in \NN$.
Since  $k \ge k_m$  we can  apply the  Maynard--Tao  theorem  with 
$\cA(x)$ to deduce that $m' \ge m$.

By the maximal property of  $m'$, it must be the case that for all 
sufficiently large $N \in \NN$, if the numbers in \eqref{eq:4} are 
all   prime,  then  $g(QN + a) + b_j$  is   composite   for  every
$
 b_j \in \{b_1,\ldots,b_k\} \setminus \{h_1,\ldots,h_{m'}\}.
$
Hence,  for   infinitely    many  $N  \in  \NN$,   the    interval
$[g(QN + a) + b_1, g(QN + a) + b_k]$   contains   precisely   $m'$ 
primes,   namely,   the   numbers  $\{gn + h_i\}_{i=1}^{m'}$  with 
$n = QN + a$.
\end{proof}

\begin{proof}[Proof of Corollary \ref{cor:1}]
Let $m \ge 2$, and let $k \ge k_{m+1}$.
Let $\cA(x) = \{x + 2^j\}_{j=1}^{k}$,  which  is easily seen to be 
admissible.
By   Theorem   \ref{thm:1},    there   exists   an   $(m+1)$-tuple
\[
 \cB(x) = \{x + 2^{\nu_j}\}_{j=1}^{m+1} \subseteq \cA(x)
\]
such that  $\cB(n)$  is an $(m+1)$-tuple of consecutive primes for 
infinitely many $n$. 
Here, $1 \le \nu_1 < \cdots < \nu_{m + 1} \le k$.
For such $n$, writing 
\[
 \cB(n) 
 = \{n + 2^{\nu_j}\}_{j=1}^{m + 1}
 = \{p_{r + 1},\ldots,p_{r + m + 1}\}
\]
with some integer $r$, we have
\[
 \delta_j 
  = d_{r+j} 
   = p_{r + j +1} - p_{r + j} 
    = 2^{\nu_{j + 1}} - 2^{\nu_j}
  \qquad(1\le j\le m).
\]
Then
\[
  \sum_{i=1}^{j-1} \delta_i
 = \sum_{i=1}^{j-1} (2^{\nu_{i + 1}} - 2^{\nu_i})
 = 2^{\nu_j} - 2^{\nu_1}
 < 2^{\nu_{j + 1}} - 2^{\nu_j}
 = \delta_j
    \qquad (2 \le j \le m).
\]
Hence, 
$
\delta_{j-1} \le \delta_1 + \cdots + \delta_{j-1} < \delta_j
$ 
for each $j$, which proves the first statement.
To    obtain    runs    of    consecutive     prime   gaps    with 
$
\delta_j > \delta_{j + 1} + \cdots + \delta_m \ge \delta_{j + 1},
$ 
consider instead the admissible $k$-tuple $\{x - 2^j\}_{j=1}^{k}$. 
This completes the proof.
\end{proof}

\begin{proof}[Proof of Corollary \ref{cor:2}]
Let $m \ge 2$, and let $k \ge k_{m + 1}$.
Put   $Q   =   \prod_{p \le k}  p$,  and   define   the   sequence 
$b_1,\ldots,b_k$ inductively as follows. 
Let
\[
 b_1 = 0, \quad  b_2 = Q, \quad b_3 = 2Q,
\]
and for any $j \ge 3$ let 
\[
 b_j = b_{j - 1} + \prod_{1 \le s < t \le j - 1} (b_t - b_s).
\]
Note that
\begin{equation}
 \label{eq:5}
  (b_{u + 1} - b_u) \mid (b_{v + 1} - b_v) \qquad (v \ge u \ge 1).
\end{equation}

Now put $\cA(x) = \{x + b_j\}_{j=1}^k$, and  observe that $\cA(x)$
is admissible since $Q$ divides each integer $b_j$. 
By  Theorem   \ref{thm:1},   there   exists   an   $(m + 1)$-tuple 
\[
 \cB(x) = \{ x + b_{\nu_j} \}_{j = 1}^{m + 1} \subseteq \cA(x)
\]
such that $\cB(n)$ is an $(m + 1)$-tuple of consecutive primes for 
infinitely many $n$. 
Here, $1 \le \nu_1 < \cdots < \nu_{m + 1} \le k$.
For any such $n$, writing 
\[
 \cB(n) 
 = \{ n + b_{\nu_j} \}_{j = 1}^{m + 1}
 = \{ p_{r + 1},\ldots,p_{r + m + 1} \}
\]
with some integer $r$, we have
\[
 \delta_j 
  = d_{r + j}
  = p_{r + j + 1} - p_{r + j}
  = b_{\nu_{j + 1}} - b_{\nu_j}
  \qquad (1 \le j \le m).
\]
Then
\[
  \prod_{i=1}^{j-1} \delta_i
 = \prod_{i=1}^{j-1} (b_{\nu_{i + 1}} - b_{\nu_i})
    \,\bigg|\,
     \prod_{1 \le s < t \le \nu_j}(b_t - b_s)
 = b_{\nu_j + 1} - b_{\nu_j}
\]
if $2\le j\le m$. 
On   the    other   hand,   using   \eqref{eq:5}   we   see   that
\[
 (b_{\nu_j + 1} - b_{\nu_j})
  \,\bigg|\,
   \sum_{i = \nu_j}^{\nu_{j + 1} - 1} (b_{i + 1} - b_i)
 = b_{\nu_{j + 1}} - b_{\nu_j}
 = \delta_j.
\]
Hence, 
$
\delta_1 \cdots \delta_{j-1} \mid \delta_j
$ 
for $2 \le j \le m$, which proves the first statement.
To    obtain    runs    of    consecutive    prime    gaps    with 
$
\delta_{m} \delta_{m-1} \cdots \delta_{j + 1} \mid \delta_j
$ 
for $1 \le j \le m - 1$, consider instead the admissible $k$-tuple 
$\{x - b_j\}_{j=1}^{k}$.
The corollary is proved.
\end{proof}

\begin{proof}[Proof of Corollary \ref{cor:3}]
Let $m \ge 2$, and let $k \ge k_m$.
Let $\{x  +  a_j\}_{j = 1}^k$  be  any  admissible  $k$-tuple with 
$a_1 < \cdots < a_k$,  and put $b_j = Da_j+a$ for $1 \le j \le k$; 
then $\{x + b_j\}_{j=1}^k$ is also admissible.
Since  $\gcd(D,b_j) = \gcd(D,a) = 1$  for  each  $j$, we can apply 
Theorem  \ref{thm:1}  with  $g = D$  to  conclude  that there is a 
subset 
$
\{h_1,\ldots,h_m\} \subseteq \{b_1,\ldots,b_k\}
$ 
such that $Dn + h_1, \ldots, Dn + h_m$  are consecutive primes for 
infinitely many $n \in \NN$;  as such primes lie in the arithmetic 
progression $a \bmod D$ and are contained in an interval of length 
$b_k - b_1 = D(a_k - a_1)$, the corollary follows.
\end{proof}

\begin{acknowledgements*}
In  the  first  draft  of  this   manuscript,  we  proved  Theorem 
\ref{thm:1} under the assumption that $k \ge \exp(\e^{12m})$.  
We thank Andrew Granville for showing that $k$  need not be larger 
than  the  number  $k_m$  in  the  Maynard--Tao  theorem  and  for 
simplifying our original proof of Theorem \ref{thm:1}. 
We  also thank  Gergely Harcos, James Maynard, and the referee for 
providing helpful comments on our earlier drafts.
\end{acknowledgements*}

\end{document}